\documentclass[a4paper,12pt]{article}
\usepackage{amsmath,amssymb} \usepackage{euler,eucal}
%
% fonts
%
  \usepackage[utf8]{inputenc}
  \usepackage[T1]{fontenc}
  \usepackage[english]{babel}
  \usepackage{tgpagella}
%
% hypref
%
\usepackage[unicode=true,plainpages=false]{hyperref}
\usepackage{graphicx}
\hypersetup{colorlinks=false,pdfborder={0 0 0.1},linkcolor=magenta,anchorcolor=magenta,urlcolor=blue,citecolor=blue,
          pdftitle={On the decay of elements of inverse triangular Toeplitz matrix},
          pdfauthor={Neville Ford, Dmitry V. Savostyanov, Nickolay L. Zamarashkin},
          pdfkeywords={triangular Toeplitz matrix, Volterra-type equation,  slow decay,  stability}
          }

%
% page layout
%
\usepackage[left=25mm,right=25mm,top=25mm,bottom=30mm,a4paper]{geometry}
%
% packages
%
\usepackage{algorithmic}
\usepackage[ruled]{algorithm}
\usepackage{amsthm}
\theoremstyle{definition}

\newtheorem{remark}{Remark}
\newtheorem{example}{Example}
\newtheorem{lemma}{Lemma}
\newtheorem{statement}{Statement}
\newtheorem{corollary}{Corollary}
\newtheorem{theorem}{Theorem}
\newtheorem{definition}{Definition}
%
% TikZ definitions
%
\usepackage{tikz} \usetikzlibrary{positioning,shapes,arrows}
%
% math definitions
%

\def\eps{\varepsilon}
\def\phi{\varphi}

\def\l{l}

\def\x{\mathbin{\times}}
\def\o{\mathbin{\raise1pt\hbox{$\scriptscriptstyle\mathord\otimes$}}}
\def\had{\mathbin{\raise1pt\hbox{$\scriptstyle\mathord\odot$}}}

%
%  ARTICLE
%
\begin{document}
\author{
    Neville Ford\footnotemark[2], 
    D. V. Savostyanov\footnotemark[2]~\footnotemark[3],
    N. L. Zamarashkin\footnotemark[3] }
\title{On the decay of elements of inverse triangular Toeplitz matrix%
\thanks{During this work, D. V. Savostyanov was supported by the Leverhulme Trust as a Visiting Research Fellow at the University of Chester.
         }}
\date{August 03, 2013}
\maketitle
\renewcommand{\thefootnote}{\fnsymbol{footnote}}
\footnotetext[2]{University of Chester, Parkgate Road, Chester, CH1 4BJ, UK ({\tt njford@chester.ac.uk})}
\footnotetext[3]{Institute of Numerical Mathematics of Russian Academy of Sciences, Gubkina 8, Moscow 119333, Russia ({\tt [dmitry.savostyanov,nikolai.zamarashkin]@gmail.com})}
\renewcommand{\thefootnote}{\arabic{footnote}}

\begin{abstract}
We consider half-infinite triangular Toeplitz matrices with slow decay of the elements and prove under a monotonicity condition that elements of the inverse matrix, as well as elements of the fundamental matrix, decay to zero.
We also provide a quantitative description of the decay of the fundamental matrix in terms of $p$--norms.
Finally, we prove that for matrices with slow log--convex decay the inverse matrix has fast decay, i.e. is bounded.
The results are compared with the classical results of Jaffard and Veccio and illustrated by numerical example.
\par {\it Keywords:}  triangular Toeplitz matrix, Volterra-type equation, slow decay.
\par {\it AMS:}
15A09, % Matrix inversion
15B05, % Toeplitz, Cauchy and related matrices
45E10, % Integral equations of the convolutional type
\end{abstract}

\section{Introduction}
Consider a half--infinite triangular Toeplitz matrix, defined by a sequence $a=\{a_k\}_{k=0}^\infty$ as follows
\begin{equation}\nonumber
 A = \begin{bmatrix} 
       a_0 & & & & \\
       a_1 & a_0 & & & \\
       a_2 & a_1 & a_0 & & \\
       a_3 & a_2 & a_1 & a_0 & \\
       \cdot & \cdot & \cdot & \cdot & \cdot  \\
 %      a_{n-1} & \ldots & \ldots & \ldots & a_0
     \end{bmatrix}.
\end{equation}
It is easy to see that if $a_0\neq0,$ the matrix $A$ is invertible.
The inverse matrix $B=A^{-1}$ is also triangular Toeplitz with elements $b=\{b_k\}_{k=0}^\infty$ given by the following formula
\begin{equation}\nonumber% \label{eq:inv}
b_0 = \frac{1}{a_0}, \qquad b_k = - \frac{1}{a_0} \sum_{j=0}^{k-1} a_{k-j} b_j, \qquad\mbox{for}\quad k \geq 1. 
\end{equation}
Since $A$ and $B$ are triangular, the inverse of the $k\times k$ leading submatrix of $A$ is the $k \times k$ leading submatrix of $B.$
In this paper we consider matrices $A$ with non-negative elements $a_k\geq0,$ assuming without loss of generality (w.l.o.g.) that $a_0=1.$ 
We are motivated by the convolutional Volterra equation of the first kind with Abel--type kernel, for which $a_k \sim (k+1)^{-\alpha}.$ 
With this example in mind we study the asymptotic properties of sequences $a=\{a_k\}$ and $b=\{b_k\}.$

From an asymptotic point of view, the following three cases can be considered:
\begin{enumerate}
 \item \emph{fast decay}, i.e., $\sum_{k=0}^\infty |a_k| < \infty;$ 
 \item \emph{slow decay}, i.e., $a_k \to 0,$ $\sum_{k=0}^\infty |a_k| = \infty;$
 \item \emph{stagnation}, i.e., $a_k \to a_* > 0.$
\end{enumerate}
The first case includes matrices with superlinear decay, i.e., $a_k < c (1+k)^{-\alpha}$ for some $\alpha>1$ and $c>1.$ 
They were considered by Jaffard~\cite{jaffard-1990} in a very general framework of matrices with Toeplitz-type spatial decay.
The classical result of Jaffard shows that if the inverse matrix $B=A^{-1}$ is bounded, then it has the same polynomial decay of coefficients as $A.$
This excludes the situation when elements of $A$ decay fast, but $B=A^{-1}$ is not bounded, e.g.,
\begin{equation}\nonumber
 A = \begin{bmatrix} 
       1 & & & & \\
       1 & 1 & & & \\
       0 & 1& 1 & & \\
       0 & 0 & 1 & 1 & \\
       \cdot & \cdot & \cdot & \cdot & \cdot  \\
     \end{bmatrix}, \qquad 
 B = \begin{bmatrix} 
       \phantom{-}1 & & & & \\
       -1 & \phantom{-}1 & & & \\
       \phantom{-}1 & -1& \phantom{-}1 & & \\
       -1 & \phantom{-}1 & -1 & \phantom{-}1 & \\
       \cdot & \cdot & \cdot & \cdot & \cdot  \\
     \end{bmatrix}.
\end{equation}

The third case $a_k\to a_* > 0$ was considered under the monotonicity condition $a_0 \geq a_1 \geq a_2 \geq \ldots$ by Vecchio.
The upper bound for the series $\sum_{k=0}^\infty |b_k|$ was established in~\cite{ttoep-bound-2003} and improved later in~\cite{ttoep-bound-2005}. 
It follows that $b_k \to 0$ and the inverse matrix $B=A^{-1}$ belongs to the first class.

Relatively little is known about the second case --- the ``slow decay'' of matrix elements.
The results of Jaffard do not cover this case.
Vecchio mentioned in~\cite{ttoep-bound-2003} that partial sums $u_k = \sum_{j=0}^k b_j$ cannot form the converging series.
The authors of~\cite{ttoep-bound-2005} established the upper bound for $\sum_{j=0}^k |b_j|,$ which grows linearly with $k.$
However, these results do not say much about the properties of $\{b_k\}$ in the limit. 
In this paper we consider this case  and provide new results to fill the gap in the existing literature. 

The paper is organised as follows.
Definitions are introduced in Section~\ref{DEF}.
In Section~\ref{DEC} under the monotonicity condition $a_{k-1} \geq a_k,$ $k\geq1$ we prove that $u_k \to 0$ and therefore $b_k \to 0.$ 
In Section~\ref{RATE} we describe in more quantitative terms how slowly  $u_k$ decays.
In Section~\ref{CONVEX} we prove that for a matrix with slow log--convex decay the inverse matrix is bounded.
In Section~\ref{NUM} we present a numerical example, which illustrates the results we obtained.

\section{Preliminaries and definitions} \label{DEF}
\begin{definition}[\cite{vecchio-sum-2001}]
The \emph{fundamental matrix} $\{u_k\}$ of a sequence $\{a_k\}$ is defined as follows
       $u_{-1}=0,$ $u_k=\sum_{j=0}^k b_j$ for $k\geq0,$ where $\{b_j\}$ defines the inverse matrix.
\end{definition}
The fundamental matrix generates the elements of the inverse matrix as $b_j=u_j-u_{j-1},$ $j\geq0.$
The properties of the fundamental matrix, e.g. limit and summability, allow us to study properties of the inverse matrix.
The following elementary statements can be found in, e.g.,~\cite{vecchio-sum-2001}.
\begin{statement} 
In the definitions made above, the following hold:
  \begin{subequations}
   \begin{align}
   \sum_{j=0}^k a_j b_{k-j} = \sum_{j=0}^k a_{k-j} b_j = 0              & \qquad\mbox{for}\quad k \geq 1; \label{eq:ab}\\
   \sum_{j=0}^k a_j u_{k-j} = \sum_{j=0}^k a_{k-j} u_j = 1              & \qquad\mbox{for}\quad k \geq 0; \label{eq:au}\\
   %\sum_{j=0}^k b_j v_{k-j} = \sum_{j=0}^k b_{k-j} v_j = 1              & \qquad\mbox{for}\quad k \geq 0; \label{eq:bv}\\
   %b_k = \sum_{j=0}^{k-1} b_{j} d_{k-j} = \sum_{j=1}^{k} b_{k-j} d_{j} & \qquad\mbox{for}\quad k \geq 2; \label{eq:bb}\\
    u_k = \sum_{j=0}^{k-1} u_{j} d_{k-j} = \sum_{j=1}^{k} u_{k-j} d_{j} & \qquad\mbox{for}\quad k \geq 1, \label{eq:uu}
    \end{align}
  \end{subequations}
 where $d_k=a_{k-1}-a_k$ for $k \geq 1.$ 
\end{statement}
\begin{proof}
Consider the non-diagonal entries of $AB=I$ to prove~\eqref{eq:ab}.
Summation over the $k'$ leading rows of this linear system gives~\eqref{eq:au} as follows
\begin{equation}\nonumber
 1 = \sum_{k=0}^{k'} \sum_{j=0}^k a_j b_{k-j} =  \sum_{k=0}^{k'} \sum_{j=0}^{k'} a_j b_{k-j} = \sum_{j=0}^{k'} a_j \sum_{k=0}^{k'} b_{k-j} = \sum_{j=0}^{k'} a_j u_{k'-j},
\end{equation}
where we set $b_k = 0$ for $k < 0.$ 
%Similarly, we obtain~\eqref{eq:bv}.
%From~\eqref{eq:ab} obtain~\eqref{eq:bb}  as follows
%\begin{equation}\nonumber
%  a_0b_p  = 0-\sum_{k=0}^{p-1} a_{p-k} b_{k} =  \sum_{k=0}^{p-1} a_{p-1-k} b_{k} - \sum_{k=0}^{p-1} a_{p-k} b_{k} = \sum_{k=0}^{p-1} d_{p-k} c_k.
%\end{equation}
From~\eqref{eq:au} the reccurence relation~\eqref{eq:uu} is written as follows
\begin{equation}\nonumber
  a_0u_k  = 1 - \sum_{j=0}^{k-1} a_{k-j} u_j = \sum_{j=0}^{k-1} a_{k-1-j}u_j - \sum_{j=0}^{k-1} a_{k-j} u_j = \sum_{j=0}^{k-1} d_{k-j} u_j.
\end{equation}
\end{proof}

\section{The decay of elements of the fundamental matrix} \label{DEC}
In this section we assume that $\{a_k\}$ decays monotonically. 
This leads to the following nice statement, see, e.g.~\cite{vecchio-sum-2001}.
\begin{statement}
 If $a_k\geq0$ and $d_k = a_{k-1} - a_k \geq 0$ for all $k\geq 1,$ then 
\begin{equation} \label{eq:u}
  0 \leq u_k \leq 1 \qquad\mbox{for}\quad k \geq 0.
\end{equation}  
\end{statement}
\begin{proof}
 The statement can be proved by an inductive argument.
 Since $u_0=b_0=1,$ the base of induction holds. 
 Then, if $0 \leq u_j \leq 1$ for $1 \leq j \leq k-1,$ we use~\eqref{eq:uu} and we write
 \begin{equation}\nonumber
  \begin{split}
   u_k & = \sum_{j=1}^{k} u_{k-j} d_{j} \geq 0, \\
   u_k & = \sum_{j=1}^{k} u_{k-j} d_{j} \leq \sum_{j=1}^k d_j = 1-a_k \leq 1.
  \end{split} 
 \end{equation}
\end{proof}

We observe that for a triangular Toeplitz matrix $A$ with monotone slow decay the elements of the inverse matrix $B=A^{-1}$ also decay to zero.
\begin{theorem}\label{thm1}
 If $1=a_0 > a_1 \geq a_2 \geq \ldots \geq a_{k-1} \geq a_k \geq \ldots$ and $a_k\to0,$ but the series $\sum_{k=0}^{\infty} a_k$ diverges, then $u_k \to 0.$ 
\end{theorem}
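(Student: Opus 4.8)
The plan is to read \eqref{eq:uu} as a renewal-type recurrence and combine it with \eqref{eq:au} and the \emph{a priori} bound \eqref{eq:u}. Writing $d_j=a_{j-1}-a_j$, the hypotheses give $d_j\ge0$ and, by telescoping, $\sum_{j\ge1}d_j=a_0-\lim_k a_k=1$, so $\{d_j\}_{j\ge1}$ is a probability distribution; moreover $\sum_{j\ge1}jd_j=\sum_{m\ge1}\sum_{j\ge m}d_j=\sum_{m\ge1}a_{m-1}=\sum_{k\ge0}a_k=\infty$ by the divergence hypothesis. Thus \eqref{eq:uu} is an aperiodic (since $d_1=a_0-a_1>0$) renewal recurrence with infinite mean, and the classical renewal theorem already predicts $u_k\to0$ (the reciprocal of an infinite mean). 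I would, however, give a short self-contained argument tailored to this monotone setting.

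Set $L=\limsup_k u_k$, which lies in $[0,1]$ by \eqref{eq:u}, equivalently $\liminf_k(L-u_k)=0$, and pick $n_i\to\infty$ with $u_{n_i}\to L$. The endgame is quick once we have the following rigidity claim: for every fixed $m\ge0$ one has $u_{n_i-m}\to L$ as $i\to\infty$. Indeed, granting it, rewrite \eqref{eq:au} as $1=\sum_{i'=0}^{n_i}a_{i'}u_{n_i-i'}$; discarding the non-negative tail and letting $i\to\infty$ gives $1\ge\sum_{i'=0}^{M}a_{i'}u_{n_i-i'}\to L\sum_{i'=0}^{M}a_{i'}$ for each fixed $M$. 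Since $\sum_k a_k=\infty$, letting $M\to\infty$ forces $L=0$, and then $u_k\to0$ because $u_k\ge0$.

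The rigidity claim is the heart of the matter, and I expect it to be the main obstacle. To prove it, put $w_k=L-u_k$, so that $\liminf_k w_k=0$ and $|w_k|\le1$, and rewrite \eqref{eq:uu} using $\sum_{j\ge1}d_j=1$ as $w_n=\sum_{j=1}^{n}d_j\,w_{n-j}+L\,a_n$, where the remainder $La_n$ accounts for the missing mass $\sum_{j>n}d_j=a_n$. Along $n_i$ both $w_{n_i}$ and $La_{n_i}$ tend to $0$, hence $\sum_{j=1}^{n_i}d_j w_{n_i-j}\to0$. Fixing $M$ and splitting this sum at $j=M$, the tail is at most $a_M$ in modulus (as $|w|\le1$ and $\sum_{j>M}d_j\le a_M$), while in the head each $w_{n_i-j}$ with $j\le M$ exceeds $-\varepsilon$ for large $i$ because $\liminf_k w_k=0$. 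Isolating the $j=1$ term and using $d_1>0$ then yields $\limsup_i w_{n_i-1}\le(a_M+\varepsilon)/d_1$; as $M$ and $\varepsilon$ are arbitrary and $a_M\to0$, and since $\liminf_i w_{n_i-1}\ge0$, we get $w_{n_i-1}\to0$, i.e.\ $u_{n_i-1}\to L$. Applying the same step to the sequence $n_i-1$ (which again tends to infinity and again realises the $\limsup$) and iterating gives $u_{n_i-m}\to L$ for all $m$ by induction.

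It is worth stressing where each hypothesis enters. The naive bounds on \eqref{eq:uu} are scale-invariant and yield no contradiction on their own, so the divergence $\sum_k a_k=\infty$ is used only at the end, through \eqref{eq:au}, after rigidity has spread the value $L$ across an entire window $u_{n_i},u_{n_i-1},\dots,u_{n_i-M}$. The strict inequality $a_0>a_1$, i.e.\ $d_1>0$, supplies exactly the aperiodicity needed for the one-step transfer of the $\limsup$; without it the value $L$ could be propagated only along an arithmetic progression of lags and the concluding estimate would break down.
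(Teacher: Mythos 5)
Your proof is correct. It shares the paper's overall skeleton --- set $L=\limsup_k u_k$, propagate the value $L$ backwards over a window of fixed length using \eqref{eq:uu}, then let the divergence of $\sum_k a_k$ contradict \eqref{eq:au} unless $L=0$ --- but your implementation of the propagation step is genuinely different and cleaner. The paper first proves, by a separate density argument against \eqref{eq:au}, that the set of indices with $u_j>u_*-\varepsilon$ has unbounded gaps; it then sits at the right end of a long gap so that the intermediate terms of \eqref{eq:uu} obey the one-sided bound $u_{k-j}<u_*-\varepsilon$, and the resulting backward recursion degrades its constants geometrically ($c_j=(c^{j+1}-1)/(c-1)$ with $c=1/(1-a_1)$), which is why $\varepsilon$ must be taken exponentially small in $N$. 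You bypass the gap argument entirely: the only control you need on the window terms is $w_k=L-u_k>-\varepsilon$ for all large $k$, which is free from the definition of the limsup (together with $|w_k|\le 1$ from \eqref{eq:u} for the tail), and taking $i\to\infty$ before $\varepsilon\to0$ and $M\to\infty$ yields the exact limit $u_{n_i-m}\to L$ for every fixed $m$, with no loss of constants and a single concluding contradiction instead of two. What the paper's route buys is an explicit quantitative picture (how close the whole window must stay to $u_*$ as a function of $N$ and $\varepsilon$); what yours buys is brevity and the conceptual identification of \eqref{eq:uu} as an aperiodic infinite-mean renewal equation, with $d_1=1-a_1>0$ playing exactly the same role (the division in the one-step transfer) in both arguments, consistent with the paper's remark that this hypothesis is technical and relaxable.
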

\begin{proof}
Consider all convergent subsequences $\{u_{k_t}\}$ of the sequence $\{u_k\}$ and let
$$
u_* = \max_{\{u_{k_t}\}} \lim_{t\to\infty} u_{k_t}.
$$
By~\eqref{eq:u}, $0 \leq u_* \leq 1.$ 
Suppose that $u_*>0.$
Since $\sum_{k=0}^\infty a_k$ is divergent, we can choose $N$ such that
$$ 
\sum_{k=0}^N a_k > \frac{2}{u_*}
$$ 
and arbitrary small $\eps$ such that 
$$
\eps <  \frac{c-1}{c^N-1} \: \frac{u_*}{2}, \qquad \mbox{where}\quad c=\frac{1}{1-a_1}.
$$
Denote by $\{j_t\}_{t=0}^\infty$ the subsequence of indices for which $u_{j_t} > u_*-\eps.$
If the step sizes of $\{j_t\}$ are bounded (see Fig.~\ref{fig1}, left), i.e.,
$$
\exists h \: \forall t\geq 0: \quad j_{t+1} - j_t \leq h,
$$
then for some sufficiently large $T$ the following inequality holds
$$
\sum_{j=0}^{j_T} a_{j_T-j} u_j \geq 
\sum_{t=0}^T a_{j_T-j_t} u_{j_t} \geq
(u_*-\eps) \sum_{t=0}^{T} a_{j_T-j_t} \geq
(u_*-\eps) \sum_{t=0}^{T} a_{ht} \geq
\frac{u_*-\eps}{h} \sum_{t=0}^{T} a_t > 1.
$$
The contradiction with~\eqref{eq:au} shows that the step sizes of $\{j_t\}$ are not bounded (see Fig.~\ref{fig1}, right). 

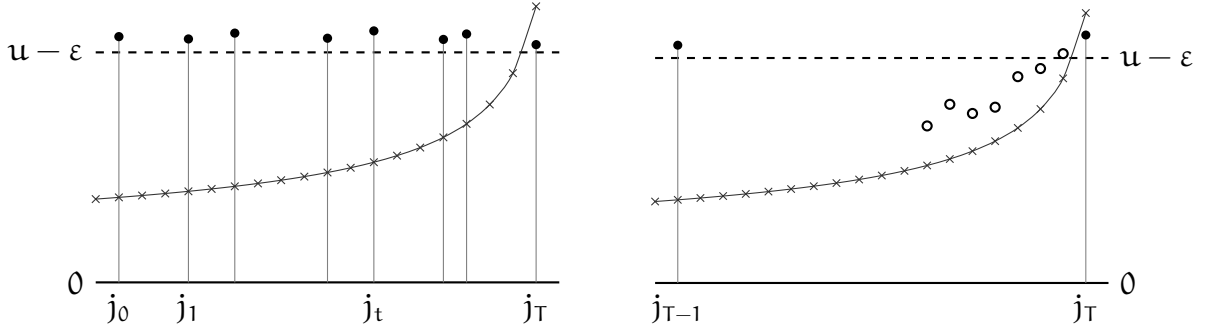
\begin{figure}[t]
 \begin{center}
  \resizebox{.47\textwidth}{!}{\begin{tikzpicture}[x=3mm,y=3mm]
 \draw[thick]         (0,0)  node [anchor=east] {$0$}      -- (20,0) ;
 \draw[dashed,thick]  (0,10) node [anchor=east] {$u-\eps$} -- (20,10);
 \foreach \x in {1,4,6,10,12,15,16,19}
  {
  \draw[gray] (\x,0) -- (\x,10.1+rnd) coordinate (c);
  \fill[black,thick] (c) circle (.6mm);
  }
  \draw ( 1,0) node [anchor=north] {$j_0$};
  \draw ( 4,0) node [anchor=north] {$j_1$};
  \draw (12,0) node [anchor=north] {$j_t$};
  \draw (19,0) node [anchor=north] {$j_T$};
  \draw[color=black!80] plot[domain=0:19,samples=20,mark=x,smooth] (\x,{12/(20-\x)^.4))});
\end{tikzpicture}
 }
  \hfill
  \resizebox{.47\textwidth}{!}{\begin{tikzpicture}[x=3mm,y=3mm]
 \draw[thick]         (0,0)   -- (20,0)  node [anchor=west] {$0$}      ;
 \draw[dashed,thick]  (0,10)  -- (20,10) node [anchor=west] {$u-\eps$} ;
 \foreach \x in {1,19}
  {
  \draw[gray] (\x,0) -- (\x,10.1+rnd) coordinate (c);
  \fill[black] (c) circle (.6mm);
  }
  \foreach \x in {12,13,14,15,16,17,18}
   \draw[black,thick] (\x,\x/2 + 1.5*rnd) circle (.6mm);

  \draw ( 1,0) node [anchor=north] {$j_{T-1}$};
  \draw (19,0) node [anchor=north] {$j_{T}$};
  \draw[color=black!80] plot[domain=0:19,samples=20,mark=x,smooth] (\x,{12/(20-\x)^.4))});
\end{tikzpicture}
 }
 \end{center} 
\caption{Illustration of the Proof of Theorem~\ref{thm1}. 
Marks: $\circ$ --- sequence $u_j,$ $\bullet$ --- subsequence $u_{j_t},$ $\times$ --- sequence $a_{j_T-j}.$
} \label{fig1}
\end{figure}

Choose $M$ such that $a_M \leq \eps,$ and $T$ such that $j_T - j_{T-1} \geq M + N.$
For $j_{T-1} < j < j_T$ all elements $u_j < u_*-\eps,$ since none of them belongs to $\{u_{j_t}\}.$
Set $k=j_T,$ use~\eqref{eq:uu} and write the following inequality
\begin{equation}\label{eq:uniform}
 \begin{split}
  u_*-\eps \leq u_k & = d_1 u_{k-1} + \sum_{j=2}^{M-1} d_j u_{k-j} + \sum_{j=M}^k d_j u_{k-j} \\
                  & \leq d_1 u_{k-1} + \sum_{j=2}^{M-1} d_j (u_*-\eps) + \sum_{j=M}^k d_j \\
                  %& \leq d_1 u_{k-1} + (a_1 - a_M) (u_* - \eps) + (a_M - a_N) \\
                  & \leq (1-a_1) u_{k-1} + a_1 (u_* - \eps) + \eps.
 \end{split}
\end{equation}
This shows that  
$$
u_{k-1} \geq u_* -  \left(1 + \frac{1}{1-a_1} \right)\eps = u_* - c_1 \eps, \qquad c_1 = 1 + c = \frac{c^2-1}{c-1}.
$$
Assume now that for $j<N,$  $u_{k-j+1} \geq u_* - c_{j-1} \eps$ holds with $c_{j-1} = \frac{c^j-1}{c-1}.$ 
To prove the induction step, write (similarly to~\eqref{eq:uniform}) the following inequality
\begin{equation}\nonumber
u_{k-j} \geq u_* - \left(1+ \frac{c_{j-1}}{1-a_1}\right)\eps = u_* - c_j \eps, \qquad c_j = 1 + \frac{c^j-1}{c-1} c= \frac{c^{j+1}-1}{c-1}.
\end{equation}
Using the assumption on $\eps$ we conclude that
\begin{equation}\nonumber
u_{k-j} \geq u_* - c_j \eps > \frac{u_*}{2} \qquad \mbox{for}\quad j=0,\ldots,N-1.
\end{equation}
Now we are ready to show the contradiction with~\eqref{eq:au}.
Indeed, for $k=j_T$ the following holds:
$$
\sum_{j=0}^k a_j u_{k-j} \geq \sum_{j=0}^N a_j u_{k-j} > \frac{u_*}{2} \sum_{j=0}^N a_j > 1.
$$
The contradiction proves $u_* = 0,$ and therefore $\exists \lim_{k\to\infty} u_k = 0.$
\end{proof}
\begin{corollary} 
 Under the conditions of Theorem~\ref{thm1}, $\exists\lim_{k\to\infty}b_k = 0.$ 
\end{corollary}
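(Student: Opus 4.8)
The plan is to reduce the claim directly to Theorem~\ref{thm1} through the defining relation $b_j = u_j - u_{j-1}$ (valid for $j \geq 0$ with $u_{-1} = 0$), which expresses every coefficient of the inverse matrix as a consecutive difference of the fundamental matrix. First I would observe that the hypotheses of the corollary are exactly those of Theorem~\ref{thm1}, so that theorem applies and yields $\lim_{k\to\infty} u_k = 0$. Since a convergent sequence and any of its shifts share the same limit, the shifted sequence $\{u_{k-1}\}$ also tends to $0$.

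With both limits in hand, I would pass to the limit in $b_k = u_k - u_{k-1}$ and use the fact that the limit of a difference is the difference of the limits, obtaining $\lim_{k\to\infty} b_k = \lim_{k\to\infty} u_k - \lim_{k\to\infty} u_{k-1} = 0$. There is no real obstacle here: all of the substantive work is carried by Theorem~\ref{thm1}, and the remaining step is the elementary observation that the first difference of a sequence converging to zero itself converges to zero. The only subtlety worth flagging is that Theorem~\ref{thm1} delivers the existence of $\lim_{k\to\infty} u_k$ outright, not merely a bound on $\limsup$ or $\liminf$, so no auxiliary subsequence argument is required and the difference of limits is genuinely legitimate.
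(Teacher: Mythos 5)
Your argument is correct and is exactly the one the paper intends: the corollary is stated without an explicit proof precisely because $b_k = u_k - u_{k-1}$ together with $\lim_{k\to\infty} u_k = 0$ from Theorem~\ref{thm1} gives the result immediately. Your added remark that Theorem~\ref{thm1} supplies a genuine limit (not just a $\limsup$ bound) is a fair point of care, but there is no substantive difference from the paper's (implicit) reasoning.
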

\begin{remark}
The requirement $a_1 < 1$ in Theorem~\ref{thm1} is technical and can be relaxed. 
\end{remark}
\begin{proof}
Consider the minimal index $l$ such that $a_l < 1.$ 
Define $N$ such that $\sum_{k=0}^N a_{lk} > 2/u_*,$  $c = 1/(1-a_l)$ and $\eps$ and $M$ in the same way as in the proof of the Theorem.
Choose $T$ such that $j_T-j_{T-1} > M + lN,$ set $k=j_T$ and substitute~\eqref{eq:uniform} by
\begin{equation}\nonumber
  u_*-\eps \leq u_k  = d_l u_{k-l} + \sum_{j=l+1}^{M-1} d_j u_{k-j} + \sum_{j=M}^k d_j u_{k-j} \\
                   \leq (1-a_l) u_{k-l} + a_l (u_* - \eps) + \eps.
\end{equation}
This gives $u_{k-l} > u_* - c_1\eps$ and $u_{k-jl} > u_* - c_j\eps$ in the sequel for $j=0,\ldots,N-1.$ 
We have the same contradiction as in the proof of Theorem~\ref{thm1}.
\end{proof}

\section{Summability of the fundamental matrix in $p$--norms} \label{RATE}
In~\cite{ttoep-bound-2003} it is shown that under the conditions of Theorem~\ref{thm1} the series of the fundamental matrix $\sum_{k=0}^\infty u_k$ is not convergent.
In spite of the result of Theorem~\ref{thm1} we have $u_k \to 0,$ $\sum_{k=0}^\infty u_k = \infty,$ i.e., the sequence $u=\{u_k\}$ has slow decay. 

Given a sequence $a=\{a_k\}_{k=0}^\infty$ with slow decay, we sometimes can choose a power $p$ such that $\sum_{k=0}^\infty |a_k|^p < \infty.$ 
A notable example is a harmonic series $\sum_{k=1}^\infty 1/k$ which is divergent, but over-harmonic series $\sum_{k=1}^\infty 1/k^p$ converges for any $p>1.$

A quantitative measure of divergence for a sequence can be given in terms of its summability in $p$--norm.
\begin{definition}
The $p$--norm $\|a\|_p$ of an infinite sequence $\{a_k\}_{k=0}^\infty$ is defined by
$$
\|a\|_p^p = \sum_{k=0}^\infty |a_k|^p, \qquad \|a\|_\infty = \sup_k |a_k|.
$$
\end{definition}
The definition satisfies the axioms of a norm for $p\geq1.$
\begin{definition}
For $p\geq1$ we define by $l^p$ the space of sequences $a$ with $\|a\|_p<\infty.$
\end{definition}
Since $\|a\|_p \geq \|a\|_q$ for $1 \leq p \leq q,$ the embedding $l^q \subset l^p$ holds for $q \geq p.$
For sequences with slow decay the following definition makes sense.
\begin{definition}\label{rate}
For a sequence $a=\{a_k\}_{k=0}^\infty$ such that $\lim_{k\to\infty}a_k=0$ and $\sum_{k=0}^\infty |a_k|^p=\infty,$ find $p\geq1$ such that $a\notin l^p$ but $a\in l^q$ for all $q>p.$ 
The value $0 \leq 1/p \leq 1$ will be referred to as the \emph{decay rate} of $a.$
\end{definition}
\begin{example}
The harmonic series sequence $a_k=1/(k+1)$ has a decay rate $1.$
\end{example}
\begin{example}
The sub-harmonic series sequence $a_k = (1+k)^{-\alpha},$ $0 < \alpha \leq 1,$  has the decay rate $\alpha.$
\end{example}
\begin{remark}
The reverse of the last example is not true: if a sequence has decay rate $\alpha,$ we cannot claim that $a_k \leq c (1+k)^{-\alpha}$ for some $c>1.$
\end{remark}

The analysis of the decay rate of the fundamental matrix is based on Young's convolution theorem~\cite{young-conv}.
It is one of the most basic resuls in harmonic analysis, which plays an important role, e.g., in PDE theory.
\begin{theorem}[Young's inequality for discrete convolution] \label{young}
 Let $z_k = \sum_{j=0}^k x_j y_{k-j}$ for $k\geq0.$ 
 For $1 \leq p,q,r \leq \infty$ such that 
 $$
 1 + \frac1r = \frac1p + \frac1q 
 $$
 it follows that
 \begin{equation}\nonumber %\label{eq:young}
  \|z\|_r \leq \|x\|_p  \|y\|_q.
 \end{equation}
\end{theorem}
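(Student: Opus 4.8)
The plan is to derive the inequality from the generalized (three–factor) Hölder inequality, handling the ``interior'' exponents by an explicit factorization and the boundary exponents separately. First I would reduce to non-negative sequences: since $|z_k| \le \sum_{j=0}^{k} |x_j|\,|y_{k-j}|$, replacing $x_k,y_k$ by $|x_k|,|y_k|$ only enlarges the left-hand side while leaving the right-hand side unchanged, so I may assume $x_k,y_k\ge 0$ throughout. Next I would record the elementary consequences of the hypothesis $\tfrac1p+\tfrac1q = 1+\tfrac1r$: since $q\ge1$ forces $\tfrac1r = \tfrac1p+\tfrac1q-1 \le \tfrac1p$, we always have $p\le r$ and symmetrically $q\le r$, with $r=\infty$ precisely when $\tfrac1p+\tfrac1q=1$, $r=p$ precisely when $q=1$, and $r=q$ precisely when $p=1$.

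I would then dispose of the boundary cases. For $r=\infty$ the claim $|z_k|\le\|x\|_p\|y\|_q$ for each fixed $k$ is exactly Hölder's inequality for the conjugate pair $(p,q)$. For $p=1$ (hence $q=r$) the bound $\|z\|_r\le\|x\|_1\|y\|_r$ follows from Minkowski's inequality in $l^r$ applied to the shifted copies of $y$, and the case $q=1$ is symmetric; the degenerate situations $p=\infty$ or $q=\infty$ collapse to $p=\infty,\,q=1,\,r=\infty$ and are immediate.

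The main case is $1<p,q<\infty$, which by the above gives the strict inequalities $p<r$ and $q<r$. Here the decisive step is the factorization
\[
x_j\,y_{k-j} = \bigl(x_j^{\,p}\,y_{k-j}^{\,q}\bigr)^{1/r}\cdot x_j^{\,1-p/r}\cdot y_{k-j}^{\,1-q/r},
\]
to which I would apply the three–exponent Hölder inequality with the triple $r$, $s=\tfrac{pr}{r-p}$, $t=\tfrac{qr}{r-q}$; a short check shows $s,t\ge1$ and $\tfrac1r+\tfrac1s+\tfrac1t = \tfrac1p+\tfrac1q-\tfrac1r = 1$. Since the $l^s$–norm of $x_j^{1-p/r}$ is $\|x\|_p^{1-p/r}$ and the $l^t$–norm of $y_{k-j}^{1-q/r}$ is $\|y\|_q^{1-q/r}$, this yields for the inner sum
\[
z_k \le \Bigl(\sum_{j=0}^{k} x_j^{\,p}\,y_{k-j}^{\,q}\Bigr)^{1/r}\,\|x\|_p^{\,1-p/r}\,\|y\|_q^{\,1-q/r}.
\]
Raising to the power $r$ and summing over $k$, I would interchange the order of summation (Tonelli, all terms non-negative) in the ``master convolution'' $\sum_k\sum_{j} x_j^{p}\,y_{k-j}^{q} = \|x\|_p^{p}\,\|y\|_q^{q}$, which reduces $\|z\|_r^r$ to $\|x\|_p^{r-p}\|y\|_q^{r-q}\cdot\|x\|_p^{p}\|y\|_q^{q} = \|x\|_p^{r}\|y\|_q^{r}$; taking $r$-th roots gives the claim.

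The only genuine obstacle is locating the correct factorization and carrying out the exponent bookkeeping — verifying that $s,t\ge1$, that their reciprocals sum with $\tfrac1r$ to $1$, and that the powers $1-p/r$ and $1-q/r$ are exactly those produced when the two auxiliary factors are raised to $s,t$ and summed. Once this is in place, every remaining step is routine, and the interchange of summation is justified purely by non-negativity.
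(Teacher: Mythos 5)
Your proof is correct and, in the decisive case $1<p,q<\infty$, uses exactly the paper's argument: the factorization $x_jy_{k-j}=(x_j^p y_{k-j}^q)^{1/r}\,x_j^{1-p/r}\,y_{k-j}^{1-q/r}$ followed by the three--exponent H\"older inequality with the same exponent triple and the same Tonelli interchange for $\sum_k\sum_j x_j^p y_{k-j}^q$. The only divergence is the boundary case $p=1$, $q=r$: the paper proves it by another H\"older splitting, $|x_jy_{k-j}|=|y_{k-j}|^{1/q'}\cdot(|x_j||y_{k-j}|^{1/q})$, whereas you invoke Minkowski's inequality for the sum of shifted copies $\sum_j x_j\,y_{\cdot-j}$ in $l^r$ --- a cleaner and equally valid route. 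Both versions handle the remaining degenerate cases ($r=\infty$, $p=q=r=1$) in the obvious way, so there is no gap.
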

The discrete version of this theorem is not common in the literature and we provide the proof in our Appendix.
Using this inequality, we can estimate the decay rate of the fundamental matrix.
\begin{theorem}\label{decay}
Consider a triangular Toeplitz matrix generated by a nonnegative slowly decaying sequence 
$$a=\{a_k\}_{k=0}^\infty, \qquad a_k\geq0,\qquad \lim_{k\to\infty} a_k = 0, \qquad \sum_{k=0}^\infty a_k=\infty.$$ 
If $a$ has decay rate $0 \leq \alpha \leq 1,$ the fundamental matrix has decay $\upsilon \leq 1 - \alpha.$
\end{theorem}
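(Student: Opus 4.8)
The plan is to read off from~\eqref{eq:au} the single convolution identity that drives everything. Writing $\mathbf 1=\{1\}_{k=0}^{\infty}$ for the constant sequence, the relation $\sum_{j=0}^{k}a_{k-j}u_j=1$ says exactly that $a$ and the fundamental matrix $u=\{u_k\}$ convolve to $\mathbf 1$, i.e. $a\star u=\mathbf 1$. Since $a_k\to0$ and $\sum_k a_k=\infty$, Theorem~\ref{thm1} together with the divergence of $\sum_k u_k$ recalled at the start of this section guarantees that $u$ is itself slowly decaying, so that its decay rate $\upsilon$ in the sense of Definition~\ref{rate} is well defined and lies in $[0,1]$. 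I would then argue by contradiction: assume $\upsilon>1-\alpha$ and deduce that $a\star u$ would be too summable to equal $\mathbf 1$.

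Concretely, suppose $\upsilon>1-\alpha$, so that $\alpha+\upsilon>1$. By Definition~\ref{rate} applied to $a$ I may pick $p$ with $1/p<\alpha$ and $a\in l^p$; applied to $u$ I may pick $s$ with $1/s<\upsilon$ and $u\in l^s$. Because $\alpha+\upsilon>1$, these choices can be made so that in addition $1/p+1/s>1$. Defining $r$ by $1+\tfrac1r=\tfrac1p+\tfrac1s$ then yields $1/r\in(0,1)$, so $r\in(1,\infty)$ is finite, and Young's inequality (Theorem~\ref{young}) applied to $z=a\star u$ gives
\[
\|\mathbf 1\|_r=\|a\star u\|_r\le\|a\|_p\,\|u\|_s<\infty .
\]
This is the desired contradiction, since $\|\mathbf 1\|_r^{\,r}=\sum_{k=0}^{\infty}1=\infty$ for every finite $r$. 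Hence $\upsilon\le1-\alpha$.

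The only delicate point, and the part I would write out carefully, is arranging the three exponents simultaneously. The constraints $1/p<\alpha$, $1/s<\upsilon$ and $1/p+1/s>1$ cut out a nonempty open triangle in the $(1/p,1/s)$--plane precisely when $\alpha+\upsilon>1$, which is exactly our assumption, so an admissible choice always exists; both strict inequalities are genuinely needed, since $a\notin l^{1/\alpha}$ and $u\notin l^{1/\upsilon}$ at the critical exponents themselves. Everything else is routine: the admissibility $p,s,r\in[1,\infty]$ required by Young follows from $\alpha,\upsilon\le1$, and the failure of the constant sequence to lie in any $l^r$ with $r<\infty$ is immediate. I do not expect the monotonicity hypothesis of Theorem~\ref{thm1} to enter the estimate itself; it is used only beforehand, to know that $u$ decays and hence that $\upsilon$ is meaningful.
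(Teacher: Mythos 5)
Your proposal is correct and follows essentially the same route as the paper: assume $\alpha+\upsilon>1$, pick exponents $p,q$ with $a\in l^p$, $u\in l^q$ and $1/p+1/q>1$, and apply Young's convolution inequality (Theorem~\ref{young}) to $a\star u=\mathbf 1$ to reach a contradiction with $\|\mathbf 1\|_r=\infty$ for finite $r$. Your write-up is in fact a little more careful than the paper's about why the three exponents can be chosen simultaneously and why the resulting $r$ is finite and admissible.
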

\begin{proof}
The result of Vecchio~\cite{ttoep-bound-2003} proves $\upsilon < 1.$
Suppose that $\alpha + \upsilon > 1,$ then according to the Definition~\ref{rate} 
$$ 
\exists p,q\geq1, \quad \frac1p + \frac1q > 1, \qquad\mbox{such that} \quad \|a\|_p < \infty,  \quad \|u\|_q < \infty.
$$
By Young's inequality for the sequence $z_k = \sum_{j=0}^k a_j c_{k-j}$ there is $1 < r < \infty$ such that 
$$\|z\|_r \leq \|a\|_p \|c\|_q < \infty.$$
However, by~\eqref{eq:au}, $z_k=1$ for all $k\geq 0$ and $\|z\|_r = \infty$ for all $r < \infty.$
The conclusion of the theorem follows by contradiction.
\end{proof}

\section{Inverse and fundamental matrices in the log--convex case} \label{CONVEX}
Following~\cite{kingman-1961}, a function $f(x)$ is log--convex (or \emph{superconvex}) if $\log f(x)$ is convex.
A similar notion is defined for sequences as follows.
\begin{definition}
A sequence $a=\{a_k\}_{k=0}^\infty, a_k\geq0$ is called \emph{log--convex} if $a_k\geq0$ and  
$$a_k^2\leq a_{k-1}a_{k+1} \qquad\mbox{for}\quad k\geq1.$$
\end{definition}
Log--convex functions and sequences are often used to study densities and discrete distributuions in probability. 
\begin{remark}\label{pos}
 If a log--convex sequence satisfies $a_0 > 0$ and $a_1 > 0$ then all other elements are also positive.
\end{remark}

\begin{theorem}\label{thm4}
 For a triangular Toeplitz matrix $A$ defined by a nonegative log--convex sequence, the inverse matrix $B=A^{-1}$ has elements $b_k\leq0$ for $k\geq1.$
\end{theorem}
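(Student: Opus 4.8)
The plan is to recognise this as the classical statement that the reciprocal of a log--convex (Kaluza) sequence has nonpositive coefficients, and to prove it by strong induction directly from the recurrence for $\{b_k\}$. First I would dispose of the degenerate case: if $a_1=0$, then log--convexity forces $a_k=0$ for all $k\ge1$ and the claim is trivial, so by Remark~\ref{pos} I may assume $a_k>0$ for every $k$. Writing $c_k=-b_k$ and using $a_0=1$, the defining recurrence (equivalently the second form of~\eqref{eq:ab}) becomes
$$ c_k = a_k - \sum_{j=1}^{k-1} a_{k-j}\,c_j, \qquad c_1=a_1\ge 0, $$
so the theorem is equivalent to the assertion $c_k\ge0$ for all $k\ge1$.

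I would then run a strong induction on $k$, assuming $c_j\ge0$ for $1\le j\le k-1$. The upper estimate $c_k\le a_k$ is immediate, since the subtracted sum consists of nonnegative terms; the whole difficulty is the lower bound, i.e.\ showing that the convolution $\sum_{j=1}^{k-1}a_{k-j}c_j$ does not exceed $a_k$. The key is to encode log--convexity as monotonicity of the ratios $r_m=a_m/a_{m-1}$: the hypothesis $a_m^2\le a_{m-1}a_{m+1}$ says exactly that $\{r_m\}$ is nondecreasing. Since $k-j\le k-1$ for every index $j$ in the sum, this yields $a_{k-j}=r_{k-j}\,a_{k-1-j}\le r_{k-1}\,a_{k-1-j}$, and multiplying by $c_j\ge0$ (here the inductive nonnegativity is essential) lets me factor the ratio out:
$$ \sum_{j=1}^{k-1} a_{k-j}\,c_j \;\le\; r_{k-1}\sum_{j=1}^{k-1} a_{k-1-j}\,c_j. $$

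The decisive observation --- and the step I expect to be the main obstacle to discover --- is that the shifted sum on the right collapses exactly: applying the recurrence at index $k-1$ gives $\sum_{j=1}^{k-2}a_{k-1-j}c_j=a_{k-1}-c_{k-1}$, and adding back the $j=k-1$ term $a_0c_{k-1}=c_{k-1}$ shows $\sum_{j=1}^{k-1}a_{k-1-j}c_j=a_{k-1}$. Hence the convolution is at most $r_{k-1}a_{k-1}=a_{k-1}^2/a_{k-2}$, which is $\le a_k$ by one further application of log--convexity. This forces $c_k\ge0$ and closes the induction. The reason the argument is delicate is that the obvious comparisons fail: bounding $c_j\le a_j$ only reproduces the self-convolution of $\{a_k\}$, which may well exceed $a_k$, so that naive estimate is too lossy. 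What makes it work is peeling off a single ratio factor $r_{k-1}$ and recognising that the resulting one-step-shorter convolution is governed by the recurrence itself, reducing everything to the pointwise inequality $a_{k-1}^2\le a_{k-2}a_k$. I would finally remark that, via $b_k=u_k-u_{k-1}$, this conclusion is equivalent to the fundamental matrix $\{u_k\}$ being nonincreasing.
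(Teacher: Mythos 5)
Your proof is correct, and while it shares the paper's overall strategy --- strong induction on the sign of $b_k$, with log--convexity entering only through the monotonicity of the ratios $r_m=a_m/a_{m-1}$ --- the key manipulation is genuinely different. The paper takes the identity $a_k=-\sum_{j=1}^k a_{k-j}b_j$ divided by $a_{k-1}$, the analogous identity at index $k+1$ divided by $a_k$, subtracts them, and then shows each resulting coefficient $\frac{a_{k+1-j}}{a_k}-\frac{a_{k-j}}{a_{k-1}}$ is nonpositive via a telescoping sum of consecutive ratio differences; the sign of $b_{k+1}$ follows because a nonpositive left-hand side equals $b_{k+1}/a_k$ plus a sum of products of two nonpositive factors. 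You instead bound the convolution $\sum_{j=1}^{k-1}a_{k-j}c_j$ directly by extracting the largest ratio $r_{k-1}$ (a step that, as you note, needs the inductive nonnegativity of the $c_j$), and then observe that the shifted convolution $\sum_{j=1}^{k-1}a_{k-1-j}c_j$ collapses exactly to $a_{k-1}$ because it is the recurrence at index $k-1$; everything then reduces to the single pointwise inequality $a_{k-1}^2\le a_{k-2}a_k$. Both arguments dispose of the degenerate case $a_1=0$ in the same way, and both rely on strict positivity of the $a_k$ to divide by them. Your route avoids the telescoping computation and isolates precisely where log--convexity is used, at the price of the less obvious collapse of the shifted sum; the paper's route stays closer to the raw recurrence for the $b_j$ and never needs to identify that shifted convolution explicitly.
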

\begin{proof}
The invertibility of $A$ requires $a_0\neq0,$ and we assume w.l.o.g. $a_0=1.$
If $a_1=0$ and $a$ is log--convex then all further elements of the sequence are zeroes.
Indeed, $a_2^2 \leq a_1a_3=0$ hence $a_2=0$ and so on.
Such a sequence defines a unit matrix $A=I$ with $B=A^{-1}=I.$  
The statement of Theorem holds for this trivial case.

If $a_1 > 0$ then all further elements of the log--convex sequence are also strictly positive.
To prove this, apply $a_{k+1} \geq a_k^2 / a_{k-1} > 0$ recursively for $k\geq2.$
Therefore all elements of $a$ are non--zeroes and we can divide by them.

Since $a_0=1,$ the inversion equation gives $b_1 = -a_1 \leq 0.$
Assume that $b_j \leq 0$ for $j=1,\ldots,k$ and prove the same for $b_{k+1}.$
From~\eqref{eq:ab} it follows that $a_k = -\sum_{j=1}^{k} a_{k-j} b_j$ and for $k\geq1$ it follows that
\begin{equation}\nonumber
 \begin{split}
  -\frac{a_k}{a_{k-1}} & = \sum_{j=1}^k \frac{a_{k-j}}{a_{k-1}} b_j, \\
  -\frac{a_{k+1}}{a_k} & = \sum_{j=1}^{k+1} \frac{a_{k+1-j}}{a_k} b_j = \sum_{j=1}^{k} \frac{a_{k+1-j}}{a_k} b_j + \frac{b_{k+1}}{a_k}.
 \end{split}
\end{equation}
Substracting, we obtain
\begin{equation}\nonumber
  \frac{a_k}{a_{k-1}} -\frac{a_{k+1}}{a_k} = \sum_{j=1}^{k} \left(\frac{a_{k+1-j}}{a_k} - \frac{a_{k-j}}{a_{k-1}} \right)  b_j + \frac{b_{k+1}}{a_k},
\end{equation}
where the left-hand side is non-positive since $a$ is log--convex.
Similarly, each round bracket in the right--hand side has the same sign as
\begin{equation}\nonumber
  \frac{a_{k+1-j}}{a_{k-j}} - \frac{a_k}{a_{k-1}} 
      =  \left( \frac{a_{k+1-j}}{a_{k-j}} - \frac{a_{k+2-j}}{a_{k+1-j}} \right) 
      + \left( \frac{a_{k+2-j}}{a_{k+1-j}} - \frac{a_{k+3-j}}{a_{k+2-j}} \right) 
      + \ldots + \left( \frac{a_{k-1}}{a_{k-2}} - \frac{a_k}{a_{k-1}} \right) \leq 0,
\end{equation}
where each term is non-positive due to the log--convexity of $a.$
Finally,
\begin{equation}\nonumber
 \frac{b_{k+1}}{a_k} = \left(\frac{a_k}{a_{k-1}} -\frac{a_{k+1}}{a_k}\right) - \sum_{j=1}^{k} \left(\frac{a_{k+1-j}}{a_k} - \frac{a_{k-j}}{a_{k-1}} \right)  b_j \leq 0.
\end{equation}
Each round bracket in the right--hand side is non-positive, and all $b_j\leq0,$ $j=1,\ldots,k,$ are non-positive by the assumption of our recursion.
It follows that $b_{k+1}\leq0,$ and the theorem is proved by recursion.
\end{proof}

\begin{theorem}\label{final}
If a triangular Toeplitz matrix saisfies both the conditions of Thm.~\ref{thm1} and Thm.~\ref{thm4}, i.e. has slow log--convex decay, then $B=A^{-1}$ has fast decay, and $\|B\|_1 = \sum_{k=0}^\infty |b_k| = 2.$   
\end{theorem}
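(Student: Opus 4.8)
The plan is to read everything off from the two preceding theorems, which between them control the sign of the $b_k$ and the limit of their partial sums. From Theorem~\ref{thm4} the log--convexity of $a$ gives the sign pattern $b_0 = 1/a_0 = 1$ and $b_k \le 0$ for all $k \ge 1$, while Theorem~\ref{thm1} gives $u_k \to 0$. I expect these two ingredients together to determine $\|B\|_1$ completely.

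First I would dispose of the degenerate situation: the divergence of $\sum_k a_k$ forces infinitely many nonzero terms, so $a_1 > 0$ (otherwise log--convexity would propagate $a_1=0$ to all later elements, exactly as in the proof of Theorem~\ref{thm4}), and the trivial case $A=I$ does not arise. Thus the hypotheses of both theorems apply simultaneously.

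Next, using the sign information, I would rewrite the $1$--norm as a signed sum,
\[
  \|B\|_1 = \sum_{k=0}^\infty |b_k| = |b_0| + \sum_{k=1}^\infty (-b_k) = 1 - \sum_{k=1}^\infty b_k,
\]
and then evaluate the remaining series through the fundamental matrix. Since $u_k = \sum_{j=0}^k b_j$ is by definition the $k$--th partial sum of $\sum_j b_j$, the limit $u_k \to 0$ supplied by Theorem~\ref{thm1} gives $\sum_{j=0}^\infty b_j = 0$, hence $\sum_{k=1}^\infty b_k = -b_0 = -1$. Substituting yields $\|B\|_1 = 1-(-1) = 2$; in particular $\|B\|_1 < \infty$, so $B \in l^1$ has fast decay.

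I do not expect a genuine obstacle here. The only point deserving a sentence of care is the passage from ``partial sums tend to $0$'' to ``the series converges to $0$'', which is immediate precisely because $u_k$ \emph{is} the partial--sum sequence; moreover the tail is absolutely convergent, since $b_k \le 0$ for $k\ge1$ gives $\sum_{k\ge1}|b_k| = -\sum_{k\ge1}b_k = 1 < \infty$. The substance of the result is therefore entirely carried by Theorems~\ref{thm1} and~\ref{thm4}, and the present statement is the clean arithmetic consequence of combining the vanishing limit with the definite sign structure of the inverse.
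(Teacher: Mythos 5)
Your argument is correct and is essentially identical to the paper's own proof: both combine $u_k \to 0$ from Theorem~\ref{thm1} (so $\sum_{k=0}^\infty b_k = 0$) with the sign pattern $b_0=1$, $b_k\le 0$ for $k\ge 1$ from Theorem~\ref{thm4} to get $\|B\|_1 = 1 - \sum_{k=1}^\infty b_k = 2$. Your extra remark ruling out the degenerate case $a_1=0$ is a small additional care the paper omits, but it does not change the route.
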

\begin{proof}
For matrices with slow decay the result of Thm.~\ref{thm1} gives $0=\lim_{k\to\infty} u_k = 1 + \sum_{k=1}^{\infty} b_k.$ 
For matrices with log--convex decay Thm.~\ref{thm4} proves $b_k \leq 0$ for $k\geq1.$
Taking these conditions together we have 
$
\sum_{k=0}^\infty |b_k| = 1 - \sum_{k=0}^\infty b_k = 2,
$
which completes the proof.
\end{proof}

\section{Numerical example} \label{NUM}
\begin{figure}[t]
 \begin{center}
      \includegraphics[width=.495\textwidth]{./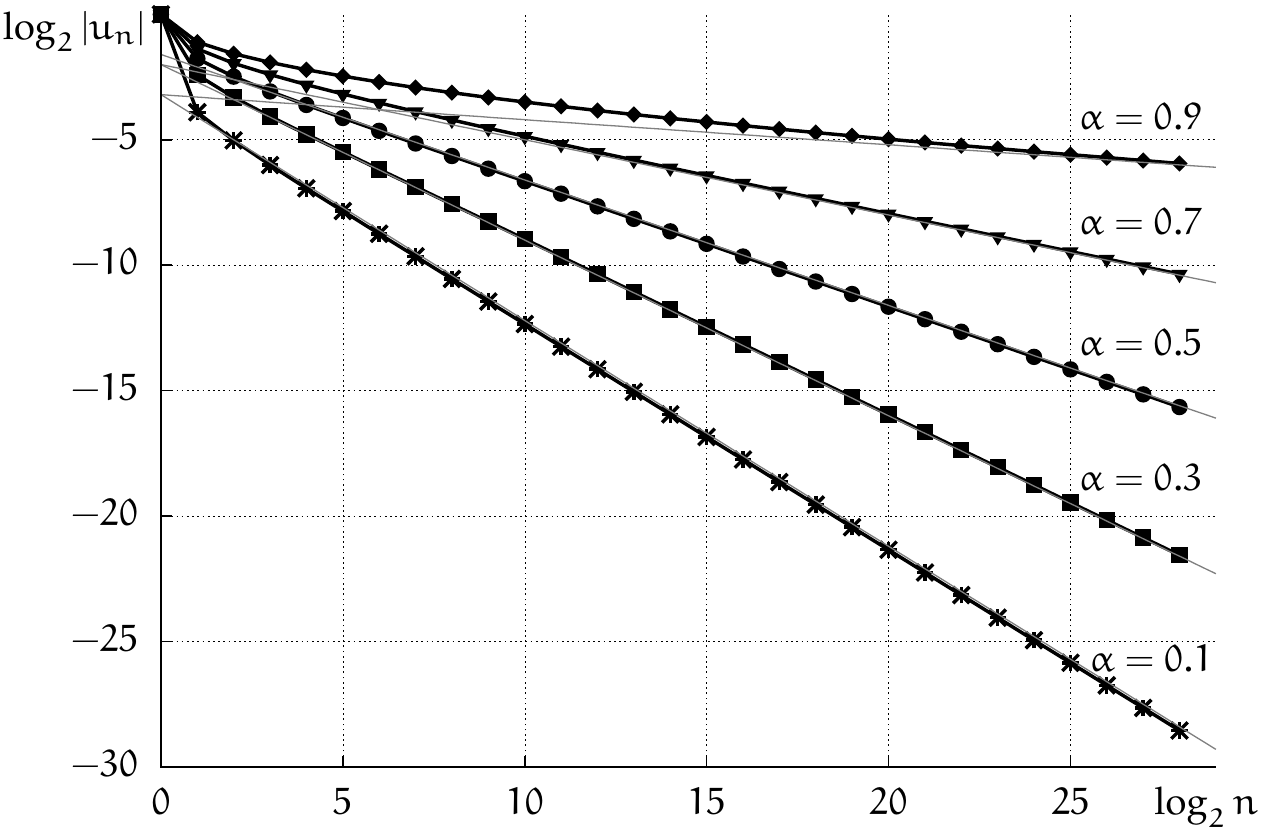} \hfil
      \includegraphics[width=.495\textwidth]{./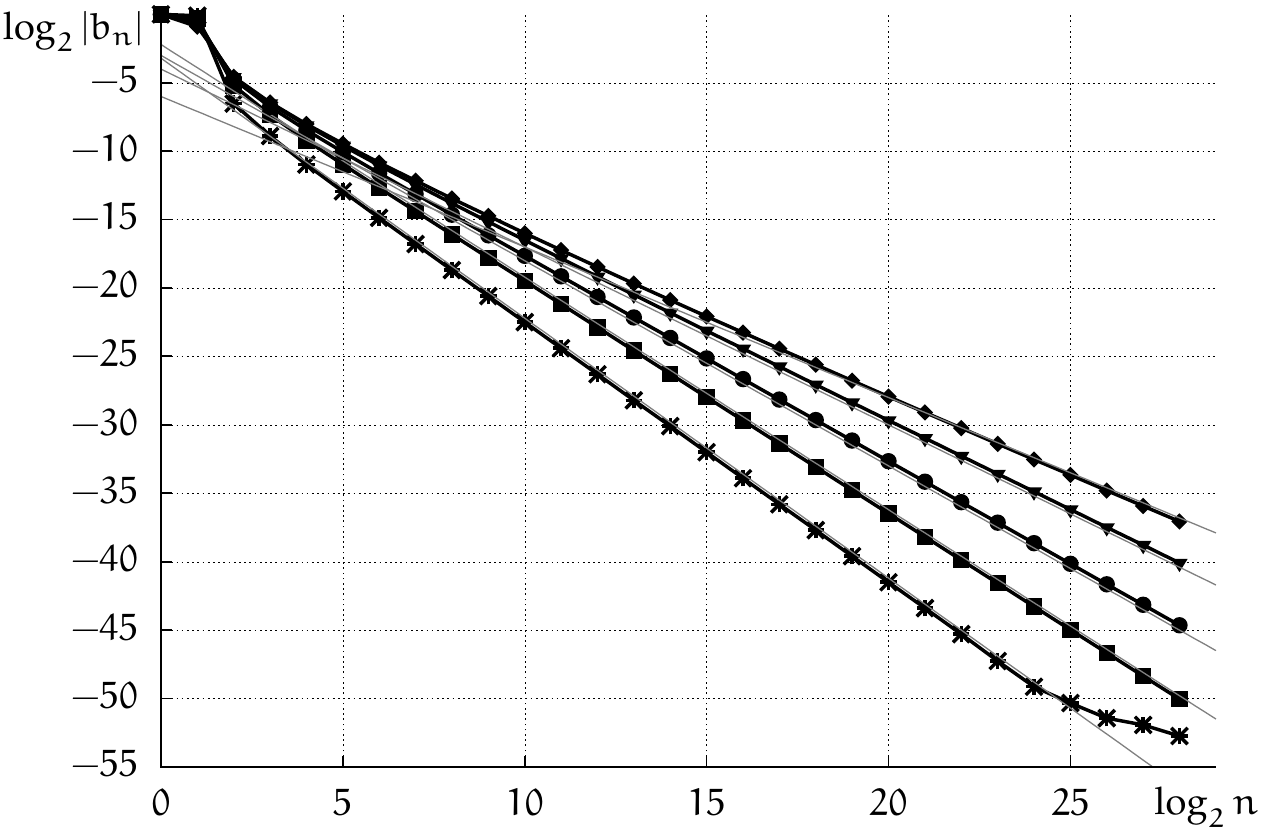}
 \end{center}
 \caption{Decay of the fundamental matrix (left) and inverse matrix (right) for the triangular Toeplitz matrix with elements $a_k=(1+k)^{-\alpha}$ for different $\alpha < 1.$} \label{fig:a}
\end{figure}
We consider the triangular Toeplitz matrix generated by a sequence $a_k=(1+k)^{-\alpha},$ which is log--convex and has slow decay for $\alpha<1.$ 
For different values of $\alpha$ we have computed the inverse using the divide-and-conquer algorithm~\cite{morf-dc-1980,commenges-dc-1984} for very large matrices.
On Fig.~\ref{fig:a} we show the decay of elements of the inverse and the fundamental matrix for different $\alpha.$
We observe that the rate of decay $\upsilon$ for the fundamental matrix behaves in accordance with the result of Thm.~\ref{decay}, i.e. $\upsilon = 1 - \alpha.$ 
Note that the example seems to provide a sharp bound for the inequality in Thm.~\ref{decay} but we do not have a theoretical proof of this fact yet.

Since in the example considered here, the matrix has log--convex decay, the fundamental matrix $u_k$ decays monotonically. 
It is no surprise that the elements of the inverse matrix, which behave like a numerical derivative of $u_k,$ have the decay rate $\beta=1+\upsilon = 2-\alpha,$ which is clearly observed in Fig.~\ref{fig:a}.

\section{Conclusion}
For the triangular Toeplitz matrices with slow decay we have established new results on the decay of the inverse and the fundamental matrix.
A particularly interesting case is established by Thm.~\ref{final}, in which we considered a matrix with slow log--convex decay and proved that the inverse matrix is bounded. 
The proposed results extend the classical analysis of Jaffard~\cite{jaffard-1990} and the results of Veccio et
al~\cite{vecchio-sum-2001,ttoep-bound-2003,ttoep-bound-2005}.

The proposed results may be used to prove the stability of numerical schemes for convolutional Volterra equations of the first
kind~\cite{gripenberg-volterra1-1980}, which are less studied than the equations of the second kind~\cite{lubich-volterra-1986,blanck-1995,blanck-1996,brunner-2004}.

\newpage
\appendix
\section{Young's inequality for discrete convolutions}
Here we provide the proof of Young's convolution theorem~\ref{young} for sequences.
We start from several lemmas.
\begin{lemma}[Young's inequality for products~\cite{young-prod}]
For non-negative $x, y$ and $p, q\geq 1$ such that $1/p + 1/q = 1$,
 \begin{equation}\label{eq:prod}
  xy \leq \frac{x^p}{p} + \frac{y^q}{q}.
 \end{equation}
\end{lemma}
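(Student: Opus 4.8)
The plan is to reduce Young's product inequality to the convexity of the exponential function, which is the shortest self-contained route and uses nothing beyond elementary calculus. First I would dispose of the degenerate cases: if $x=0$ or $y=0$ the right-hand side is nonnegative and the inequality is immediate, so I may assume $x,y>0$. Since $1/p+1/q=1$ with $p,q\geq1$ forces $1<p,q<\infty$ (the endpoint $p=1$ corresponds to $q=\infty$, treated separately under the $l^\infty$ convention), the exponents $\lambda=1/p$ and $\mu=1/q$ form a genuine pair of convex weights, with $\lambda,\mu\in(0,1)$ and $\lambda+\mu=1$.

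The central step is to introduce $u=\log x^p=p\log x$ and $v=\log y^q=q\log y$, so that $\lambda u+\mu v=\log x+\log y=\log(xy)$. Applying the convexity bound $e^{\lambda u+\mu v}\leq\lambda e^{u}+\mu e^{v}$ for the function $t\mapsto e^t$ then yields
\begin{equation}\nonumber
xy=e^{\lambda u+\mu v}\leq\lambda e^{u}+\mu e^{v}=\frac{x^p}{p}+\frac{y^q}{q},
\end{equation}
which is exactly~\eqref{eq:prod}. The convexity of the exponential can itself be justified in one line from the positivity of its second derivative, so the whole argument stays fully elementary; strict convexity moreover gives equality if and only if $u=v$, i.e. $x^p=y^q$.

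The only point requiring any care — hence the main (minor) obstacle — is the bookkeeping of the exponents: one must check that the weights $\lambda=1/p$ and $\mu=1/q$ attached to $e^{u}$ and $e^{v}$ reproduce $x^p/p$ and $y^q/q$, and that $\lambda u+\mu v$ collapses to $\log(xy)$. This is precisely where the hypothesis $1/p+1/q=1$ is consumed. If one prefers to avoid the substitution, an equivalent route is to fix $y>0$ and minimise $g(x)=x^p/p+y^q/q-xy$ over $x\geq0$: the stationary point $x_\star=y^{1/(p-1)}$ satisfies $x_\star^p=x_\star y=y^q$ (using $q=p/(p-1)$), so $g(x_\star)=y^q(1/p+1/q-1)=0$, while $g''(x)=(p-1)x^{p-2}>0$ shows this is the global minimum and hence $g\geq0$. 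Either formulation delivers the stated bound.
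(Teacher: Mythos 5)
Your proof is correct. Note that the paper itself offers no proof of this lemma --- it is stated in the appendix with only a citation to Young's original 1912 paper --- so there is no internal argument to compare against; your convexity-of-the-exponential derivation (writing $xy = e^{\lambda u + \mu v}$ with $u = p\log x,$ $v = q\log y,$ $\lambda = 1/p,$ $\mu = 1/q$) is the standard textbook route and fully suffices, as does your alternative minimisation of $g(x) = x^p/p + y^q/q - xy$. Your handling of the edge cases is also sound: the degenerate case $x=0$ or $y=0$ is immediate, and the hypothesis $1/p+1/q=1$ with finite $p,q\geq1$ indeed forces $1<p,q<\infty,$ so the weights $\lambda,\mu$ lie strictly in $(0,1)$ and convexity applies; the endpoint $p=1,$ $q=\infty$ is not actually admitted by the lemma as stated, so setting it aside costs nothing (in the convolution theorem the paper handles those endpoint exponents directly, in cases (A) and (B), without invoking this lemma).
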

\begin{lemma}[H\"older's inequality]
 \begin{equation}\label{eq:hol}
 \|xy\|_1 \leq \|x\|_p \|y\|_q \qquad\mbox{for}\quad 1 = \frac1p + \frac1q, \quad p,q \geq 1,
 \end{equation}
 where $z=xy$ denotes the elementwise product of sequences $x$ and $y,$ i.e., $z_j=x_jy_j,$ $j\geq0.$
\end{lemma}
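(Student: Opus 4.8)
The plan is to deduce the summed inequality~\eqref{eq:hol} from the pointwise product inequality~\eqref{eq:prod} by the classical normalization argument. The strategy is to reduce to the case in which both norms are finite and nonzero, rescale $x$ and $y$ so that each rescaled sequence has unit $p$- (respectively $q$-) norm, apply~\eqref{eq:prod} term by term, and then sum over $j$.

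First I would dispose of the degenerate and endpoint cases. If either $\|x\|_p=\infty$ or $\|y\|_q=\infty$, the right-hand side is infinite and nothing is to be proved; if either norm vanishes, then the corresponding sequence is identically zero, so $z_j=x_jy_j=0$ for all $j$ and both sides are zero. The pair $(p,q)=(1,\infty)$ (and symmetrically $(\infty,1)$), which is not covered by~\eqref{eq:prod}, is handled directly: $\|xy\|_1=\sum_j |x_j|\,|y_j| \leq \bigl(\sup_k |y_k|\bigr)\sum_j |x_j| = \|y\|_\infty\|x\|_1.$ This leaves the principal range $1<p,q<\infty$ with $0<\|x\|_p,\|y\|_q<\infty$.

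In that range I would set $\tilde x_j = |x_j|/\|x\|_p$ and $\tilde y_j = |y_j|/\|y\|_q$, so that $\sum_j \tilde x_j^{\,p} = \sum_j \tilde y_j^{\,q} = 1$ by construction. Applying~\eqref{eq:prod} to the non-negative numbers $\tilde x_j,\tilde y_j$ for each $j$ and summing over $j$ gives
\[
\sum_{j\geq0} \tilde x_j \tilde y_j \;\leq\; \frac1p \sum_{j\geq0} \tilde x_j^{\,p} + \frac1q \sum_{j\geq0} \tilde y_j^{\,q} \;=\; \frac1p + \frac1q \;=\; 1.
\]
Multiplying through by $\|x\|_p\|y\|_q$ and recognising $\sum_j |x_j|\,|y_j| = \|xy\|_1$ on the left recovers~\eqref{eq:hol}.

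The computation carries no real obstacle; the only points requiring care are the bookkeeping of the degenerate and endpoint cases above, and the legitimacy of summing the termwise estimate. The latter is immediate, since every term is non-negative, so term-by-term summation of the series is unconditional and the intermediate rearrangements are justified. In this way Hölder's inequality for sequences is obtained as a direct corollary of the pointwise Young product inequality~\eqref{eq:prod}.
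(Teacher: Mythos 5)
Your proof is correct and follows essentially the same route as the paper's: normalize to $\|x\|_p=\|y\|_q=1$, apply Young's product inequality~\eqref{eq:prod} termwise, and sum. Your explicit treatment of the endpoint pair $(p,q)=(1,\infty)$, which~\eqref{eq:prod} does not cover, is a small point of extra care that the paper's own proof omits.
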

\begin{proof}
 If $\|x\|_p=0$ or $\|y_q\|=0$ then $xy=0$ and the result is trivial.
 For non-zero $x$ and $y$ w.l.o.g. we set $\|x\|_p=\|y\|_q=1.$ 
 Then using~\eqref{eq:prod} we write
 $$
 \sum_{j=0}^\infty \left|x_jy_j\right| 
       \leq \sum_{j=0}^k \left( \frac{|x_j|^p}{p} + \frac{|y_j|^q}{q} \right) 
       \leq \frac{\|x\|^p}{p} + \frac{\|y\|^q}{q} 
       = \frac1p + \frac1q = 1,
 $$
 which proves the inequality.
\end{proof}
\begin{lemma}[Generalized H\"older's inequality]
If $\sum_{k=1}^m 1/p_k = 1/r$ for $p>0$ and $0<r<\infty,$ and sequences $x_k\in\l_k,$ then
\begin{equation}\label{eq:Hol}
 \|x_1 x_2 \ldots x_m\|_r \leq \|x_1\|_{p_1} \|x_2\|_{p_2}  \ldots \|x_m\|_{p_m}.
\end{equation}
\end{lemma}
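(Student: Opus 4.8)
The plan is to derive the statement from two ingredients already established in the appendix: the elementary product inequality \eqref{eq:prod} and the two–factor Hölder inequality \eqref{eq:hol}. The argument splits into two essentially independent moves — a rescaling that removes an arbitrary exponent $r$ in the two–factor case, and an induction on the number of factors $m$ that reduces the many–factor case to pairs.

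First I would prove the two–factor version carrying a general exponent, namely $\|x_1x_2\|_r\le\|x_1\|_{p_1}\|x_2\|_{p_2}$ whenever $1/p_1+1/p_2=1/r$. Setting $y_i=|x_i|^r$ and $q_i=p_i/r$, one has $1/q_1+1/q_2=r(1/p_1+1/p_2)=1$, and since each $1/p_i\le 1/r$ the conjugate exponents $q_i\ge1$ are admissible for \eqref{eq:hol}. Then $\|x_1x_2\|_r^r=\|y_1y_2\|_1\le\|y_1\|_{q_1}\|y_2\|_{q_2}$, while the identity $\|y_i\|_{q_i}=\||x_i|^r\|_{p_i/r}=\|x_i\|_{p_i}^r$ follows directly from the definition of the $p$–norm; taking $r$-th roots gives the claim. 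The degenerate cases in which some $\|x_i\|_{p_i}$ is zero or infinite are disposed of exactly as in the proof of \eqref{eq:hol}.

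Next I would induct on $m$, the base case $m=1$ reducing to $p_1=r$ and hence to a trivial identity. For the inductive step, write $z=x_1\cdots x_{m-1}$ and define $s$ by $1/s=\sum_{k=1}^{m-1}1/p_k$, so that $1/s+1/p_m=1/r$. Applying the two–factor inequality just proved to the pair $(z,x_m)$ gives $\|zx_m\|_r\le\|z\|_s\,\|x_m\|_{p_m}$, while the inductive hypothesis applied to the $m-1$ factors $x_1,\dots,x_{m-1}$ with target exponent $s$ gives $\|z\|_s\le\prod_{k=1}^{m-1}\|x_k\|_{p_k}$. Multiplying these two estimates yields \eqref{eq:Hol} and closes the induction.

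I expect the only genuine obstacle to be bookkeeping rather than analysis: one must verify at every stage that the reciprocal exponents add up correctly ($1/s+1/p_m=1/r$ and $\sum_i 1/q_i=1$) and that the resulting exponents remain $\ge1$, which is guaranteed by the observation $1/p_k\le\sum_j 1/p_j=1/r$. No analytic difficulty beyond \eqref{eq:prod} and \eqref{eq:hol} enters; the content of the lemma is the combinatorial rescaling of the reciprocals $1/p_k$. If infinite exponents $p_k=\infty$ are to be admitted, the corresponding factor contributes $\|x_k\|_\infty$ as a uniform bound that factors out of the sum before \eqref{eq:hol} is applied, and the same induction goes through unchanged.
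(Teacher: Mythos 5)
Your proposal is correct and follows essentially the same route as the paper: induction on the number of factors, peeling off the last one, with the exponent $r$ removed by passing to the $r$-th powers so that the standard two-factor H\"older inequality \eqref{eq:hol} applies (the paper does this rescaling inline in the induction step with the pair $1/p=1-r/p_m$, $1/q=r/p_m$, whereas you package it as a separate two-factor lemma first — a purely organizational difference). The treatment of the $p_m=\infty$ case by factoring out the supremum also matches the paper.
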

\begin{proof}
For $m=1$ the result is obvious.
Suppose the result holds for $m-1$ sequences  $x_1 \ldots x_{m-1}$, we shall prove it for $m.$
If $p_m=\infty,$ the result follows by pulling out the supremum of  $x_m$ and using the induction hypothesis.
For $p_m<\infty$ consider $1/p = 1-r/p_n$ and $1/q=r/p_n,$ which form a H\"older pair $1/p+1/q=1.$
Using~\eqref{eq:hol}, we write
\begin{equation}\nonumber
 \begin{split}
  \left\| |x_1 \ldots x_{m-1}|^r   |x_m|^r\right\|_1 & \leq \left\| |x_1 \ldots x_{m-1}|^r \right\|_p \left\| |x_m|^r\right\|_q, \\
 \left\| x_1 \ldots x_{m-1} x_m\right\|_r & \leq \left\| x_1 \ldots x_{m-1} \right\|_{pr} \left\| x_m\right\|_{qr}, 
 \end{split}
\end{equation}
and obtain the result by induction since $qr=p_m$ and $\sum_{k=1}^{m-1}1/p_k = 1/r-1/p_m = 1/(pr).$
\end{proof}

%Now we are ready to prove the convolution inequality.
\begin{proof}[Theorem~\ref{young}]
We start from the following simple cases, assuming w.l.o.g. that  $p\leq q.$
 
 (A) $p=q=r=1.$ It is enough to write
 $$
 \|z\|_1 = \sum_{k=0}^\infty \left|z_k\right|
   \leq \sum_{k=0}^\infty \sum_{j=0}^k \left|x_j y_{k-j}\right| 
   = \sum_{j=0}^\infty \left|x_j\right| \: \sum_{k=0}^\infty \left| y_{k-j} \right| 
   = \|x\|_1 \|y\|_1.
 $$
 
 (B) $r=\infty,$ $1/p + 1/q = 1.$ 
Since
 $ 
 \left|z_k\right|  \leq \sum_{j=0}^k \left| x_j  y_{k-j} \right|  =\sum_{j=0}^\infty \left| x_j  y_{k-j} \right|, 
 $
 the result follows immediately from H\"older's inequality~\eqref{eq:hol}
applied to sequences $\hat x=\{|x_j|\}_{j=0}^\infty$ and $\hat y
=\{|y_{k-j}|\}_{j=0}^\infty.$
 Here and later we assume $y_j = 0$ for $j < 0.$
  
 (C) $p=1,$ $1<q=r<\infty.$ Consider $q'>1$ such that $1/q + 1/q' = 1.$ For any $k$ 
 by H\"older's inequality we have 
 \begin{equation}\nonumber
  \begin{split}
   |z_k| 
     & = \left| \sum_{j=0}^k x_j y_{k-j} \right|
       \leq \sum_{j=0}^k |x_j y_{k-j}|
       =  \sum_{j=0}^\infty |x_j y_{k-j}|
       =  \sum_{j=0}^\infty \left( |y_{k-j}|^{1/q'} \right) \left( |x_j|  |y_{k-j}|^{1/q} \right) \\
      & \leq \left( \sum_{j=0}^\infty |y_{k-j}| \right)^{1/q'} 
            \left( \sum_{j=0}^\infty |x_j|^q \: |y_{k-j}| \right)^{1/q}
      \leq \|y\|_1^{1/q'} \left( \sum_{j=0}^\infty |x_j|^q \: |y_{k-j}| \right)^{1/q},
   \end{split}
 \end{equation} 
$$ 
 \|z\|_q^q 
    = \sum_{k=0}^\infty |z_k|^q 
    \leq \|y\|_1^{q/q'} \sum_{k=0}^\infty \sum_{j=0}^\infty |x_j|^q \: |y_{k-j}| 
    = \|y\|_1^{1+q/q'} \|x\|^q_q
    = \|y\|_1^q \|x\|^q_q,
$$
which completes the proof of the case (C).
 
Now we deal with the final case $1 < p \leq q < r < \infty.$
For each $k$ consider again the sequences $\hat x$ and $\hat y,$ write
$$
|x_j y_{k-j}| = \left( |x_j|^p  |y_{k-j}|^q \right)^{1/r} \: |x_j |^{1-p/r}  |y_{k-j}|^{1-q/r}, 
$$
and apply the generalized H\"older inequality~\eqref{eq:Hol} with 
$$
p_1=r, \quad p_2=\frac{p}{1-p/r}, \quad p_3=\frac{q}{1-1/r}, \qquad 
    \frac{1}{p_1} + \frac{1}{p_2}+ \frac{1}{p_3}=\frac1r + \frac1p-\frac1r+\frac1q-\frac1r = 1.
$$
We have
\begin{equation}\nonumber
 \begin{split}
 |z_k| &\leq\sum_{j=0}^k |x_j y_{k-j}|  \leq \left( \sum_{j=0}^k|x_j|^p  |y_{k-j}|^q \right)^{1/r} \|x\|_p^{1-p/r} \|y\|_q^{1-q/r}, \\ 
 \|z\|_r^r &\leq  \|x\|_p^{r-p} \|y\|_q^{r-q} \sum_{k=0}^\infty \sum_{j=0}^\infty|x_j|^p  |y_{k-j}|^q = \|x\|_p^r \|y\|_q^r,
 \end{split}
\end{equation}
which completes the proof.
\end{proof}

%%% BIB
\newpage
%\bibliographystyle{mysiam}
%\bibliographystyle{elsarticle-num}
%\bibliography{our,tensor,fft,algebra,frac,iter}

\end{document}